\newtheorem{theorem}{Theorem}[section]
\newtheorem{proposition}[theorem]{Proposition}
\newtheorem{lemma}[theorem]{Lemma}
\theoremstyle{definition}
\newtheorem{definition}[theorem]{Definition}
\theoremstyle{remark}
\newtheorem{remark}[theorem]{Remark}
\newtheorem{example}[theorem]{Example}
\numberwithin{equation}{section}
\begin{document}
\title[Special Generic Maps of Rational Homology Spheres]{On Special Generic Maps of Rational Homology Spheres into Euclidean Spaces}

\author[D.J. Wrazidlo]{Dominik J. Wrazidlo}
\address{Institute of Mathematics for Industry, Kyushu University, Motooka 744, Nishi-ku, Fukuoka 819-0395, Japan; Phone: +497010632874; ORCID: 0000-0001-7265-1791}
\email{d-wrazidlo@imi.kyushu-u.ac.jp}
\thanks{The author has been supported by JSPS KAKENHI Grant Number JP18F18752.
This work was written while the author was a JSPS International Research Fellow (Postdoctoral Fellowships for Research in Japan (Standard)).}

\subjclass[2020]{Primary 57R45; Secondary 58K15, 58K30}

\date{\today.}

\keywords{Special generic map; definite fold point; Stein factorization; homology sphere; linking form; lens space; sphere bundle.}

\begin{abstract}
Special generic maps are smooth maps between smooth manifolds with only definite fold points as their singularities.
The problem of whether a closed $n$-manifold admits a special generic map into Euclidean $p$-space for $1 \leq p \leq n$ was studied by several authors including Burlet, de Rham, Porto, Furuya, \`{E}lia\v{s}berg, Saeki, and Sakuma.
In this paper, we study rational homology $n$-spheres that admit special generic maps into $\mathbb{R}^{p}$ for $p < n$.
We use the technique of Stein factorization to derive a necessary homological condition for the existence of such maps for odd $n$.
We examine our condition for concrete rational homology spheres including lens spaces and total spaces of linear $S^{3}$-bundles over $S^{4}$, and obtain new results on the (non-)existence of special generic maps.
\end{abstract}

\maketitle

\section{introduction}
Let $f \colon M^{n} \rightarrow \mathbb{R}^{p}$, $1 \leq p \leq n$, be a smooth map of a connected closed $n$-dimensional smooth manifold $M$ into Euclidean $p$-space.
A point $x \in M$ is called a \emph{definite fold point} of $f$ if there exist local coordinates $(x_{1}, \dots, x_{n})$ and $(y_{1}, \dots, y_{p})$ centered at $x$ and $f(x)$, respectively, such that $f$ takes the form
\begin{align*}\label{definition definite fold point}
y_{i} \circ f &= x_{i}, \qquad 1 \leq i \leq p-1, \\
y_{p} \circ f &= x_{p}^{2} + \dots + x_{n}^{2}.
\end{align*}
The map $f$ is called a \emph{special generic map} if every singular point of $f$ is a definite fold point.
In this paper, we study special generic maps of rational homology spheres, i.e., closed manifolds with the rational homology groups of a sphere.

The notion of special generic maps first appeared in a paper of Calabi \cite{cal} under the name of quasisurjective mappings.
We note that special generic maps $M^{n} \rightarrow \mathbb{R}$ are the same as Morse functions of $M$ with only maxima and minima as their critical points, and can thus only exist when $M^{n}$ is homeomorphic to the standard $n$-sphere $S^{n}$ by a well-known result of Reeb \cite{R}.
Special generic maps $M^{n} \rightarrow \mathbb{R}^{2}$ were studied for $n = 3$ by Burlet and de Rham \cite{BdR}, and for $n > 3$ by Porto and Furuya \cite{PF}, and Saeki \cite{S1}.
Moreover, Sakuma \cite{sak1} and Saeki \cite{S1} studied special generic maps $M^{n} \rightarrow \mathbb{R}^{3}$ under various assumptions on the source manifold $M^{n}$.
Hara \cite{hara} studied the existence of special generic maps $M^{n} \rightarrow \mathbb{R}^{p}$ for $p \leq n/2$ by using $L^{2}$-Betti numbers of $M^{n}$.
\`{E}lia\v{s}berg \cite{E} studied special generic maps $M^{n} \rightarrow \mathbb{R}^{n}$ for orientable $M^{n}$.

For a given source manifold $M^{n}$, Saeki posed the problem to determine the set $S(M^{n})$ of all integers $p \in \{1, \dots, n\}$ for which there exists a special generic map $M^{n} \rightarrow \mathbb{R}^{p}$ (see Problem 5.3 in \cite{S2}).
Saeki observed that $S(M^{n}) = \{1, \dots, n\}$ if and only if $M^{n}$ is diffeomorphic to the standard $n$-sphere $S^{n}$.
Moreover, we note that if $S(M^{n}) \not\subset \{n\}$, then $M$ is (oriented) nullcobordant by Corollary 3.3 in \cite{S1}.
Nishioka \cite{nis} determined the dimension set $S(M^{5})$ for any simply connected closed $5$-manifold $M$.
In \cite{W}, the author determined the dimension set $S(\Sigma^{7})$ for $14$ of Milnor's exotic $7$-spheres $\Sigma^{7}$.
\par\medskip

In this paper, we study the dimension set $S(M^{n})$ of a rational homology sphere $M$ of odd dimension $n$ by using the technique of Stein factorization of special generic maps (see \Cref{Stein factorization}).
Previously, Saeki \cite{S1} obtained the following characterization of homotopy spheres in terms of Stein factorization.

\begin{theorem}[Proposition 4.1 in \cite{S1}]\label{theorem homotopy spheres}
Let $f \colon M^{n} \rightarrow \mathbb{R}^{p}$ ($1 \leq p < n$) be a special generic map.
Then $M^{n}$ is a homotopy sphere if and only if the Stein factorization $W_{f}$ is contractible.
\end{theorem}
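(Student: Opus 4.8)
The plan is to exploit the structure theory of the Stein factorization $q_f \colon M \to W_f$ recalled in \Cref{Stein factorization}: $W_f$ is a smooth compact $p$-manifold with nonempty boundary, the induced map $\bar f \colon W_f \to \mathbb{R}^p$ is an immersion (so $W_f$ is parallelizable, hence orientable, and has the homotopy type of a CW complex of dimension $\leq p-1$), the restriction of $q_f$ over $\mathrm{int}\,W_f$ is a smooth $S^{n-p}$-bundle while $q_f$ restricts to a diffeomorphism $S(f) \xrightarrow{\cong} \partial W_f$, and $M$ is diffeomorphic to the boundary $\partial E$ of the total space $E$ of a linear $D^{n-p+1}$-bundle $\pi \colon E \to W_f$, compatibly with $q_f$ (so $q_f$ is homotopic to $\pi|_{\partial E}$ and $\pi$ is a homotopy equivalence). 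Write $S(E) \subset \partial E = M$ for the sphere subbundle and $V = \pi^{-1}(\partial W_f)$ for the tubular neighbourhood of $S(f)$ in $M$, so that $M = S(E) \cup V$ and $M \setminus S(f) \simeq S(E)$. A first observation is that $\pi_1(M) \cong \pi_1(W_f)$: since $S(f)$ has codimension $n-p+1 \geq 2$, removing it and killing the meridians (which are the $S^{n-p}$-fibres of $S(E)\simeq M\setminus S(f)$) recovers $\pi_1(M)$, and $\pi_1$ of the fibration $S^{n-p}\to S(E)\to W_f$ modulo the image of the fibre is $\pi_1(W_f)$.

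One implication is easy. If $W_f$ is contractible then so is $E$, and Poincar\'e--Lefschetz duality with the long exact sequence of $(E,\partial E)$ gives $\tilde H_k(M) \cong H_{k+1}(E,\partial E) \cong H^{n-k}(E) = H^{n-k}(\mathrm{pt})$; thus $M$ is a homology $n$-sphere, and since $\pi_1(M) \cong \pi_1(W_f) = 1$ it is a homotopy sphere. For the converse, assume $M$ is a homotopy $n$-sphere. Then $\pi_1(W_f) \cong \pi_1(M) = 1$, so it remains to show that $W_f$ is acyclic. From the long exact sequence of $(E,\partial E)$, Poincar\'e--Lefschetz duality on $E$, and the vanishing $H^k(E) = H^k(W_f) = 0$ for $k \geq p$, one obtains $H_j(W_f) \cong H_j(E) \cong H_j(M)$ for $0 \leq j \leq n-p$, so $\tilde H_j(W_f) = 0$ for $j \leq n-p$; and $H_j(W_f) = 0$ for $j \geq p$ because $W_f$ is a compact manifold with boundary. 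What is left is to kill $\tilde H_j(W_f)$ in the range $n-p < j < p$. For this I would combine: (i) Alexander duality in the homology sphere $M$ applied to the subcomplex $S(f)$, using $M\setminus S(f)\simeq S(E)$, which yields $\tilde H_j(S(E)) \cong \tilde H^{n-1-j}(\partial W_f)$; (ii) Poincar\'e duality on the closed orientable $(p-1)$-manifold $\partial W_f$, which rewrites this as $\tilde H_j(S(E)) \cong H_{j-(n-p)}(\partial W_f)$; and (iii) the Gysin sequence of the sphere bundle $S(E)\to W_f$ (orientable since $W_f$ is simply connected), which couples $H_*(S(E))$ and $H_*(W_f)$ through the Euler class. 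Feeding the vanishing already obtained into (i)--(iii) should propagate the vanishing of $\tilde H_*(W_f)$ through the intermediate degrees, making $W_f$ a simply connected acyclic finite complex, i.e.\ contractible.

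The crux is this last step: in the stable range $n \geq 2p-1$ there are no intermediate degrees to worry about, but when $n < 2p-1$ the fibre $S^{n-p}$ of $q_f$ is too small to detect $H_j(W_f)$ directly, and the vanishing has to be extracted from the interaction of Poincar\'e--Lefschetz duality on $W_f$, Alexander duality in $M$, and Poincar\'e duality on $\partial W_f$ via the Gysin sequence; a careful bookkeeping of these sequences is the main work. An alternative and perhaps cleaner route, which I would fall back on if the homological bookkeeping becomes unwieldy, is to pull back a generic linear function $\ell \colon \mathbb{R}^p \to \mathbb{R}$: since $\bar f$ is an immersion, $\ell\circ\bar f$ has no critical points in $\mathrm{int}\,W_f$, so all its critical points lie on $\partial W_f$ and produce a handle decomposition of $W_f$; the same critical points feed, through $\ell\circ f$, a handle decomposition of $M$ with exactly one handle of index $\lambda$ or $\lambda + (n-p)$ for each index-$\lambda$ handle of $W_f$; forcing the resulting chain complex to be that of $S^n$ then pins down $W_f$ as contractible.
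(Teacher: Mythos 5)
Your overall strategy coincides with the paper's: the paper deduces \Cref{theorem homotopy spheres} from its homological generalization, \Cref{theorem homology spheres} with $R=\mathbb{Z}$, combined with the isomorphism $\pi_{1}(M)\cong\pi_{1}(W_{f})$ (Proposition 3.9 in \cite{S1}) and the homology Whitehead theorem; and the proof of \Cref{theorem homology spheres} in \Cref{proof of main result} uses exactly the ingredients you list --- the long exact sequence (\ref{long exact sequence}) of the disk-bundle pair, Poincar\'e--Lefschetz duality (\ref{pl for wf}) on $W_{f}$ and (\ref{pl for bwf}) on $\partial W_{f}$, duality for the closed subset $S(f)\subset M$, and the Thom isomorphism (\ref{thom isomorphism}) for the sphere bundle over $\operatorname{int}W_{f}$ (your Gysin sequence). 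Your easy direction is correct, and your reductions $\widetilde{H}_{j}(W_{f})=0$ for $j\leq n-p$ and for $j\geq p$ match (\ref{equality interval}) and (\ref{vanishing result}).

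The one genuine gap is the step you yourself flag as the crux: you assert that feeding the known vanishing into (i)--(iii) ``should propagate'' through the range $n-p<j<p$, but you do not exhibit the mechanism, and this is where essentially all of the work lies. The missing organizational idea is that the propagation is an \emph{induction on the degree} $q$, running from $q=n-p+2$ up to $q=p$, in which the induction hypothesis $\widetilde{H}_{i}(W_{f})=0$ for $i\leq q-1$ is precisely what turns the relevant connecting homomorphisms into isomorphisms. Concretely (cf.\ (\ref{start of induction})--(\ref{induction alexander})): $H_{q}(W_{f})\cong H^{n-q+1}(W_{f})\cong H_{p-n+q-1}(W_{f},\partial W_{f})\cong\widetilde{H}_{p-n+q-2}(\partial W_{f})$, where the last map is the connecting homomorphism of $(W_{f},\partial W_{f})$ and is an isomorphism only because the adjacent groups of $W_{f}$ vanish by induction; then duality on $\partial W_{f}$ and for $S(f)\subset M$ give $\cong H_{q-2}(M\setminus S(f))$, using that $M$ is a homology sphere to kill the adjacent terms of the pair $(M,M\setminus S(f))$; finally the Thom isomorphism together with the exact sequence of $(E',M\setminus S(f))$ and the induction hypothesis, applied in degrees $q-2$ and $p-n+q-2$ (both $<q$), forces $H_{q-2}(M\setminus S(f))=0$. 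Without arranging your dualities into such an induction the argument does not close; with it, your plan is exactly the paper's. (Your handle-decomposition fallback is a genuinely different route, but as stated it needs care: the fold contributes $n-p+1$ fiber directions, so the critical points of $\ell\circ f$ over an index-$\lambda$ critical point of $\ell\circ\overline{f}|_{\partial W_{f}}$ have index $\lambda$ or $\lambda+n-p+1$, not $\lambda+n-p$, and deducing contractibility of $W_{f}$ from the chain complex of $M$ alone would again require a duality argument.)
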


In \Cref{proof of main result}, we show the following homological version of \Cref{theorem homotopy spheres} for any coefficient ring $R \neq 0$ with identity.

\begin{theorem}\label{theorem homology spheres}
Let $f \colon M^{n} \rightarrow \mathbb{R}^{p}$ ($1 \leq p < n$) be a special generic map.
Suppose that $M^{n}$ is orientable.
If $M^{n}$ is an $R$-homology $n$-sphere (see \Cref{definition homology sphere and ball}), then the Stein factorization $W_{f}$ is an $R$-homology $p$-ball.
The converse holds under the additional assumption that $R$ is a principal ideal domain (for example, $R = \mathbb{Z}$ or $R = k$ a field).
\end{theorem}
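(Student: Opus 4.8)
The plan is to read everything off the Stein factorization. Recall (see \Cref{Stein factorization}; cf. \cite{S1}) that $W_{f}$ is a compact $p$-dimensional manifold with \emph{non-empty} boundary (non-empty because, $p<n$ being assumed, a special generic map cannot be a submersion), that $q_{f}$ restricts to a diffeomorphism of the definite fold locus $S(f)$ onto $\partial W_{f}$, and that $M$ is diffeomorphic to the boundary $\partial V$ of a linear $D^{\,n-p+1}$-bundle $\pi\colon V\to W_{f}$; thus $V$ deformation retracts onto its zero section $W_{f}$, and $\pi|_{\partial V}$ may be identified with $q_{f}$. Since $W_{f}$ immerses into $\mathbb{R}^{p}$ via $\bar f$ it is parallelisable, hence orientable; and since $H^{1}(V,M;\mathbb{Z}/2)\cong H_{n}(V;\mathbb{Z}/2)\cong H_{n}(W_{f};\mathbb{Z}/2)=0$ (using $\dim W_{f}=p<n$), the restriction $H^{1}(V;\mathbb{Z}/2)\to H^{1}(M;\mathbb{Z}/2)$ is injective, so orientability of $M$ forces $w_{1}(V)=0$. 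Hence $V$ is orientable and Poincaré–Lefschetz duality for $V$ is available over every coefficient ring $R$; this, together with the long exact sequence of the pair $(V,M)$, will be the basic engine.

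For the forward implication I would feed $H_{*}(M;R)\cong H_{*}(S^{n};R)$ into the exact sequence of $(V,M)$, using $H_{k}(V,M;R)\cong H^{\,n+1-k}(V;R)\cong H^{\,n+1-k}(W_{f};R)$, which vanishes once $n+1-k>p$. This immediately gives $\widetilde H_{k}(W_{f};R)=0$ for $1\le k\le n-p$, and, since $\dim W_{f}=p$, finishes the proof when $p\le n/2$. The remaining degrees $n-p+1\le k\le p$ — empty unless $p$ is close to $n$, but genuinely present for the applications later (for instance $n=7$, $p\in\{5,6\}$) — are the crux: duality on $V$ alone only yields the self-duality $H_{k}(W_{f};R)\cong H^{\,n+1-k}(W_{f};R)$ there, which does not force vanishing. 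To push through one must use the finer fibred structure: over $\mathrm{int}\,W_{f}$ the map $q_{f}$ is the sphere bundle $S(\pi)\to W_{f}$ with fibre $S^{\,n-p}$, and $M$ is the union of $S(\pi)$ with the disc bundle $\pi^{-1}(\partial W_{f})$ along $S(\pi)|_{\partial W_{f}}$. Combining the Gysin sequences of $S(\pi)\to W_{f}$ and $S(\pi)|_{\partial W_{f}}\to\partial W_{f}$ with the Mayer–Vietoris sequence of this decomposition — after first reducing, via a universal-coefficient argument, to the case where $R$ is a field so that duality becomes a dimension count — one shows that $S(f)\cong\partial W_{f}$ is an $R$-homology $(p-1)$-sphere and then that $\widetilde H_{*}(W_{f};R)=0$ in the middle range as well; Lefschetz duality for $W_{f}$ then identifies $H_{*}(W_{f},\partial W_{f};R)$ with $H_{*}(D^{p},S^{p-1};R)$, i.e.\ $W_{f}$ is an $R$-homology $p$-ball. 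I expect this middle-degree step — in particular verifying that the singular set $\partial W_{f}$ is itself an $R$-homology $(p-1)$-sphere — to be the main obstacle: one has to control the Euler classes of $\pi$ and $\pi|_{\partial W_{f}}$ and track how $H_{*}(\partial W_{f})$ propagates into $H_{*}(M)$.

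For the converse, assume $R$ is a principal ideal domain and that $W_{f}$ is an $R$-homology $p$-ball. Then $V\simeq W_{f}$ has $\widetilde H_{*}(V;R)=0$, and by the universal coefficient theorem — this is where the hypothesis that $R$ is a principal ideal domain is used — also $\widetilde H^{*}(V;R)=0$; hence $H_{k}(V,M;R)\cong H^{\,n+1-k}(V;R)=0$ for $k\le n$, and the long exact sequence of $(V,M)$ gives $\widetilde H_{k}(M;R)=0$ for $1\le k\le n-1$. Since $M$ is a closed orientable $n$-manifold, $H_{n}(M;R)\cong R$, so $M$ is an $R$-homology $n$-sphere. (Over $\mathbb{Z}$ or a field the universal coefficient theorem is automatic, consistent with the stated examples.)
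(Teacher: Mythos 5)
Your converse direction is correct and is essentially the paper's argument: from $\widetilde H_{*}(W_{f};R)=0$ and the PID hypothesis you get $\widetilde H^{*}(V;R)=0$ by the universal coefficient theorem, and then Poincar\'e--Lefschetz duality for $V$ together with the long exact sequence of $(V,M)$ kills $\widetilde H_{*}(M;R)$. The first part of your forward direction is also fine: the pair $(V,M)$ together with $H^{n+1-k}(V;R)\cong H^{n+1-k}(W_{f};R)=0$ for $n+1-k>p$ gives $\widetilde H_{k}(W_{f};R)=0$ for $k\le n-p$ (and in fact also for $k=n-p+1$, using $H^{p}(W_{f};R)\cong H_{0}(W_{f},\partial W_{f};R)=0$ since $\partial W_{f}\neq\emptyset$).

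The genuine gap is the range $n-p+2\le k\le p$, which you correctly identify as the crux but do not prove: you state the intended conclusions (``one shows that $\partial W_{f}$ is an $R$-homology $(p-1)$-sphere and then that $\widetilde H_{*}(W_{f};R)=0$ in the middle range'') and yourself flag this as the main obstacle. Two aspects of your sketch would not survive as written. First, the proposed reduction ``via a universal-coefficient argument to the case where $R$ is a field'' is not available: the forward implication is asserted for an arbitrary nonzero commutative ring $R$ with identity, and $R$-coefficient homology is not determined by homology with field coefficients for such $R$. Second, the Gysin/Mayer--Vietoris bookkeeping is not obviously closable, because the unknown groups $H_{*}(W_{f})$ and $H_{*}(\partial W_{f})$ occur on both sides of every sequence you propose; some induction on the degree is needed to break the circularity. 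The paper does exactly this: an upward induction on $q$ from $q=n-p+2$ to $q=p$, where, assuming $\widetilde H_{i}(W_{f};R)=0$ for $i\le q-1$, one chains together (i) the self-duality $H_{q}(W_{f};R)\cong H^{n-q+1}(W_{f};R)$ from the sequence of $(V,M)$, (ii) Lefschetz duality for $W_{f}$ and the boundary exact sequence to pass to $\widetilde H_{p-n+q-2}(\partial W_{f};R)$, (iii) Poincar\'e duality on $\partial W_{f}$ and Alexander--Lefschetz duality for the closed subset $S(f)\cong\partial W_{f}$ of $M$ to pass to $H_{q-2}(M\setminus S(f);R)$, and (iv) the Thom isomorphism of the $S^{n-p}$-bundle over $\operatorname{int}W_{f}$ to show that this last group vanishes by the induction hypothesis. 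Your instinct to exploit the fibred structure over $\operatorname{int}W_{f}$ is the right one, but as it stands the middle-degree step is a plan rather than a proof.
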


We observe that \Cref{theorem homotopy spheres} is a consequence of \Cref{theorem homology spheres} for $R = \mathbb{Z}$.
In fact, this follows from the homological version of the Whitehead theorem (see Corollary 4.33 in \cite[p. 367]{H}) by noting that $M^{n}$ is simply connected if and only if the Stein factorization $W_{f}$ is simply connected (see Proposition 3.9 in \cite{S1}).

As an application of our \Cref{theorem homology spheres}, we show in \Cref{proposition torsion condition} that if a rational homology sphere $M$ of odd dimension $n = 2k+1 \geq 5$ admits a special generic map into $\mathbb{R}^{p}$ for some $1 \leq p < n$, then the cardinality of the finite abelian group $H_{k}(M; \mathbb{Z})$ is the square of an integer.
However, this is in general not a sufficient condition for the existence of special generic maps on $M$ (see \Cref{remark non-sufficiency}).
We point out that \Cref{proposition torsion condition} can be seen as a torsion analog of the fact that a closed manifold which admits a special generic map into Euclidean $p$-space for some $1 \leq p < n$ has even Euler characteristic (see e.g. Corollary 3.8 in \cite{S1}).
As shown in \Cref{proposition realization of torsion group}, the square of a positive integer can always be realized as $|H_{k}(M; \mathbb{Z})|$ for some highly connected rational homology sphere $M$ of suitable odd dimension $n = 2k+1 \geq 5$ that admits a special generic map into $\mathbb{R}^{p}$ for some $1 \leq p < n$.
On the other hand, there are plenty of rational homology $n$-spheres $M$ for which $|H_{k}(M; \mathbb{Z})|$ is \emph{not} the square of an integer, so that $M$ admits no special generic maps into $\mathbb{R}^{p}$ for any $1 \leq p < n$.
For instance, we show that this is the case for many lens spaces whose dimension is congruent to $3$ (mod $4$) (see \Cref{lens spaces}), and many total spaces of linear $S^{3}$-bundles over $S^{4}$ (see \Cref{s3 bundles over s4}).

\subsection*{Notation}
The cardinality of a set $X$ is denoted by $|X|$.
The symbol $\cong$ either means diffeomorphism of smooth manifolds or isomorphism of groups.
The singular locus of a smooth map $f$ between smooth manifolds will be denoted by $S(f)$.
Let $D^{p} = \{x = (x_{1}, \dots, x_{p}) \in \mathbb{R}^{p}; x_{1}^{2} + \dots + x_{p}^{2} \leq 1\}$ denote the closed unit ball in Euclidean $p$-space, and $S^{p-1} := \partial D^{p}$ the standard $(p-1)$-sphere.

\section{Preliminaries}

In preparation of the proofs of our results, we review in this section several results on homology spheres and homology balls (see \Cref{Homology spheres and homology balls}),
and the Stein factorization of special generic maps (see \Cref{Stein factorization}).

\subsection{Homology spheres and homology balls}\label{Homology spheres and homology balls}
Let $R \neq 0$ be a commutative ring with identity.

\begin{definition}\label{definition homology sphere and ball}
A closed $R$-orientable topological $n$-manifold $P^{n}$ is called an \emph{$R$-homology $n$-sphere} if $\widetilde{H}_{\ast}(P; R) \cong \widetilde{H}_{\ast}(S^{n}; R)$ (where note that $\widetilde{H}_{n}(S^{n}; R) \cong R$ and $\widetilde{H}_{i}(S^{n}; R) = 0$ for $i \neq n$).
A compact $R$-orientable topological $p$-manifold $Q^{p}$ with boundary is called an \emph{$R$-homology $p$-ball} if $\widetilde{H}_{\ast}(Q; R) \cong \widetilde{H}_{\ast}(D^{p}; R)$ (= 0).
\end{definition}

\begin{remark}
Let $P^{n}$ be a closed topological manifold of dimension $n > 0$ such that $H_{n}(P; R) \cong H_{n}(S^{n}; R)$ ($\cong R$).
If $2 R \neq 0$, then $P$ is automatically $R$-orientable.
In fact, if such a manifold $P^{n}$ was not $R$-orientable, then it would follow from Theorem 3.26(b) in \cite{H} that $H_{n}(P; R) \cong \{r \in R; \; 2r = 0\}$, which cannot be isomorphic to $R$ as $R$ has elements of order $>2$ by assumption.
\end{remark}

\begin{remark}\label{remark on orientability with r coefficients}
If a closed topological $n$-manifold $P^{n}$ is orientable, then $P^{n}$ is $R$-orientable for all $R$.
Conversely, if $2 \neq 0$ in $R$, then $R$-orientability of $P^{n}$ implies that $P^{n}$ orientable (see \cite[p. 235]{H}).
\end{remark}

\begin{proposition}\label{proposition homology balls bound homology spheres}
Suppose that $R$ is a principal ideal domain (for example, $R = \mathbb{Z}$ or $R = k$ a field).
If $Q^{p}$ is an $R$-homology $p$-ball of dimension $p \geq 1$, then $\partial Q$ is an $R$-homology $(p-1)$-sphere.
\end{proposition}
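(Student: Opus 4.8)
The plan is to compute $\widetilde{H}_\ast(\partial Q; R)$ out of the vanishing of $\widetilde{H}_\ast(Q; R)$ by combining Poincar\'e--Lefschetz duality with the long exact homology sequence of the pair $(Q, \partial Q)$. As a preliminary remark, note that $\partial Q$ is automatically non-empty: otherwise $Q$ would be a closed $R$-orientable $p$-manifold with $p \geq 1$ and hence satisfy $H_p(Q; R) \cong R \neq 0$, contradicting that $Q$ is an $R$-homology $p$-ball. First I would pass from homology to cohomology by means of the universal coefficient theorem over the principal ideal domain $R$ --- this is the one place where the hypothesis on $R$ enters. In the split exact sequence $0 \to \operatorname{Ext}^1_R(H_{k-1}(Q; R), R) \to H^k(Q; R) \to \operatorname{Hom}_R(H_k(Q; R), R) \to 0$, the $\operatorname{Hom}$-term vanishes for $k \geq 1$ since $H_k(Q;R) = 0$, and the $\operatorname{Ext}^1_R$-term vanishes because $H_{k-1}(Q; R)$ is free or zero (for $k = 1$ it equals $H_0(Q; R) \cong R$ and $\operatorname{Ext}^1_R(R, R) = 0$); hence $H^0(Q; R) \cong R$ and $H^k(Q; R) = 0$ for $k \geq 1$.

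Next I would invoke Poincar\'e--Lefschetz duality for the compact $R$-oriented $p$-manifold $Q$ in the form $H_k(Q, \partial Q; R) \cong H^{p - k}(Q; R)$, which shows that $H_\ast(Q, \partial Q; R)$ is $R$ in degree $p$ and $0$ otherwise. Feeding these computations of $H_\ast(Q; R)$ and $H_\ast(Q, \partial Q; R)$ into the long exact sequence of $(Q, \partial Q)$ then determines the homology of $\partial Q$ degree by degree: for $1 \leq k \leq p - 2$ one has $H_{k+1}(Q, \partial Q; R) = 0 = H_k(Q; R)$, whence $H_k(\partial Q; R) = 0$; at the top, when $p \geq 2$, the segment $H_p(Q; R) \to H_p(Q, \partial Q; R) \to H_{p-1}(\partial Q; R) \to H_{p-1}(Q; R)$ reads $0 \to R \to H_{p-1}(\partial Q; R) \to 0$, giving $H_{p-1}(\partial Q; R) \cong R$; and in degree $0$, when $p \geq 2$, the segment $H_1(Q, \partial Q; R) \to H_0(\partial Q; R) \to H_0(Q; R) \to H_0(Q, \partial Q; R)$ reads $0 \to H_0(\partial Q; R) \to R \to 0$, giving $\widetilde{H}_0(\partial Q; R) = 0$. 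The case $p = 1$ I would treat by hand: there $\partial Q$ is a compact $0$-manifold, the degree-$0$ segment becomes $0 \to R \to H_0(\partial Q; R) \to R \to 0$, which splits as $R$ is free, so $\widetilde{H}_0(\partial Q; R) \cong R \cong \widetilde{H}_0(S^0; R)$.

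Finally I would record that $\partial Q$ is a closed $(p-1)$-manifold, automatically, and that it is $R$-orientable: the boundary homomorphism $H_p(Q, \partial Q; R) \to H_{p-1}(\partial Q; R)$ sends an $R$-fundamental class of $(Q, \partial Q)$ to an $R$-fundamental class of $\partial Q$. Together with the homology computation, this shows $\partial Q$ is an $R$-homology $(p-1)$-sphere in the sense of \Cref{definition homology sphere and ball}.

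The argument is a routine duality computation, so I do not anticipate a serious obstacle; the points requiring care are the appeal to the universal coefficient theorem (which is exactly why $R$ is assumed to be a principal ideal domain), the low-degree bookkeeping --- in particular the exceptional case $p = 1$, where $S^{p-1} = S^0$ is disconnected and one must split rather than kill the relevant short exact sequence --- and the verification that $\partial Q$ is non-empty and $R$-orientable.
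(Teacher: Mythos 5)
Your proof is correct, and it reaches the conclusion by a somewhat different route than the paper. You and the paper use the same two key ingredients --- the universal coefficient theorem over the principal ideal domain $R$ to convert the vanishing of $\widetilde{H}_\ast(Q;R)$ into the vanishing of $H^{k}(Q;R)$ for $k \geq 1$, and Poincar\'e--Lefschetz duality --- but you organize the computation directly around the pair $(Q,\partial Q)$: duality gives $H_\ast(Q,\partial Q;R) \cong H^{p-\ast}(Q;R)$, and the long exact sequence of the pair then yields $H_\ast(\partial Q;R)$ degree by degree. The paper instead first punctures $Q$, forming $Q' = Q \setminus \operatorname{int} D^{p}$ with $\partial Q' = \partial Q \sqcup S^{p-1}$, shows $H_\ast(Q',S^{p-1};R)=0$ and dually $H_\ast(Q',\partial Q;R)=0$, and reads off $\widetilde{H}_\ast(\partial Q;R) \cong \widetilde{H}_\ast(Q';R) \cong \widetilde{H}_\ast(S^{p-1};R)$ from the two long exact sequences. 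The puncturing trick buys a uniform statement in all degrees at once, with the disconnectedness of $S^{0}$ handled automatically by the symmetry between the two boundary components; your direct approach is more elementary but requires the low-degree bookkeeping you carry out --- correctly --- including the split short exact sequence $0 \to R \to H_{0}(\partial Q;R) \to R \to 0$ in the exceptional case $p=1$. Your explicit observations that $\partial Q \neq \emptyset$ and that $\partial Q$ inherits an $R$-orientation (via the boundary map on fundamental classes) are points the paper passes over silently, and they are worth recording.
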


\begin{proof}
As $Q^{p}$ is a compact $R$-orientable $p$-manifold, its boundary $\partial Q$ is a closed $R$-orientable $(p-1)$-manifold.
In order to show that $\widetilde{H}_{\ast}(\partial Q; R) \cong \widetilde{H}_{\ast}(S^{p-1}; R)$,
we remove the interior of a $p$-disk $D^{p}$ embedded in the interior of $Q$ to obtain a compact $R$-orientable $p$-manifold $Q'$ with boundary the closed $R$-orientable $(p-1)$-manifold $\partial Q' = \partial Q \sqcup S^{p-1}$.
Using excision and the homotopy axiom for homology, we note that $H_{\ast}(Q', S^{p-1}; R) \cong \widetilde{H}_{\ast}(Q; R) = 0$ because $Q$ is an $R$-homology $p$-ball.
Since $R$ is a principal ideal domain, we can apply the universal coefficient theorem as stated on the bottom of p. 196 in \cite{H} to $G = R$ and the augmented chain complex $C \colon \dots \rightarrow C_{0}(Q', S^{p-1}) \otimes_{\mathbb{Z}} R \rightarrow R \rightarrow 0$ of the pair $(Q', S^{p-1})$ to conclude that $H^{\ast}(Q', S^{p-1}; R) = 0$.
Then, $H_{\ast}(Q', \partial Q; R) = 0$ by Poincar\'{e}-Lefschetz duality (see Theorem 3.43 in \cite[p. 254]{H}).
Finally, from the reduced homology long exact sequences of the pairs $(Q', \partial Q)$ and $(Q', S^{p-1})$ we then see that $\widetilde{H}_{\ast}(\partial Q; R) \cong \widetilde{H}_{\ast}(Q'; R) \cong \widetilde{H}_{\ast}(S^{p-1}; R)$.
\end{proof}

In \Cref{An application in odd dimensions}, we are eventually concerned with the case $R = \mathbb{Q}$, in which we replace the term ``$R$-homology'' by ``rational homology''.

\begin{proposition}\label{proposition r homology implies rational homology}
\begin{enumerate}[(a)]
\item\label{proposition r homology implies rational homology a}
If $P^{n}$ is an $R$-homology $n$-sphere, then $\widetilde{H}_{i}(P; \mathbb{Z})$, $i < n$, are finite abelian groups.
If $P^{n}$ is in addition orientable, then $P^{n}$ is a rational homology $n$-sphere.
\item\label{proposition r homology implies rational homology b}
If $Q^{p}$ is an $R$-homology $p$-ball, then $\widetilde{H}_{i}(Q; \mathbb{Z})$, $i \in \mathbb{Z}$, are finite abelian groups.
If $Q^{p}$ is in addition orientable, then $Q^{p}$ is a rational homology $p$-ball.
\end{enumerate}
\end{proposition}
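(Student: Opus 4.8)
The plan is to deduce both statements from the universal coefficient theorem, using only two external inputs: that a compact topological manifold has finitely generated integral homology groups, and the structure of the top homology of a closed connected orientable manifold. First I would record two easy preliminary observations: since $R \neq 0$, the condition $\widetilde{H}_{0}(\,\cdot\,;R) = 0$ built into \Cref{definition homology sphere and ball} forces $P$ (resp.\ $Q$) to be connected, so that $\widetilde{H}_{0}(\,\cdot\,;\mathbb{Z}) = 0$; and compactness gives that each $H_{i}(\,\cdot\,;\mathbb{Z})$ is a finitely generated abelian group.

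For part (a), I would argue as follows. For every $i \neq n$ we have $\widetilde{H}_{i}(P;R) = 0$ since $\widetilde{H}_{i}(S^{n};R) = 0$, so for $i \geq 1$ with $i \neq n$ the universal coefficient sequence $0 \to H_{i}(P;\mathbb{Z}) \otimes_{\mathbb{Z}} R \to H_{i}(P;R) \to \mathrm{Tor}(H_{i-1}(P;\mathbb{Z}), R) \to 0$ forces $H_{i}(P;\mathbb{Z}) \otimes_{\mathbb{Z}} R = 0$. Writing the finitely generated group $H_{i}(P;\mathbb{Z})$ as $\mathbb{Z}^{b_{i}} \oplus T_{i}$ with $T_{i}$ finite, the tensor product has $R^{b_{i}}$ as a direct summand, so $R \neq 0$ forces $b_{i} = 0$ and hence $H_{i}(P;\mathbb{Z})$ is finite; this proves the first assertion. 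If moreover $P$ is orientable, then $P$ is closed, connected, and orientable of dimension $n$, so $H_{n}(P;\mathbb{Z}) \cong \mathbb{Z}$ (see Theorem~3.26 in \cite{H}); since $\mathbb{Q}$ is flat over $\mathbb{Z}$, tensoring the integral homology with $\mathbb{Q}$ gives $\widetilde{H}_{\ast}(P;\mathbb{Q}) \cong \widetilde{H}_{\ast}(S^{n};\mathbb{Q})$, and $P$ is $\mathbb{Q}$-orientable because it is orientable (\Cref{remark on orientability with r coefficients}). Hence $P$ is a rational homology $n$-sphere.

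For part (b) the same argument runs more smoothly, because now $\widetilde{H}_{i}(Q;R) = 0$ in \emph{every} degree: the universal coefficient sequence gives $H_{i}(Q;\mathbb{Z}) \otimes_{\mathbb{Z}} R = 0$ for all $i \geq 1$, and the finite-generation argument above shows that $\widetilde{H}_{i}(Q;\mathbb{Z})$ is finite for all $i$ (it vanishes for $i \leq 0$). Tensoring with $\mathbb{Q}$ then yields $\widetilde{H}_{\ast}(Q;\mathbb{Q}) = 0$, and if $Q$ is in addition orientable it is $\mathbb{Q}$-orientable, so $Q$ is a rational homology $p$-ball.

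The only step requiring real care is the implication ``$A \otimes_{\mathbb{Z}} R = 0 \Rightarrow A$ is finite'': this fails without finite generation (for example $\mathbb{Q} \otimes_{\mathbb{Z}} \mathbb{Z}/2 = 0$ while $\mathbb{Q}$ is infinite and torsion-free), so finite generation of the homology of compact manifolds is genuinely used, and the argument must proceed via the explicit splitting $A \cong \mathbb{Z}^{b} \oplus T$ rather than elementwise. Beyond this, the proof is a routine application of the universal coefficient theorem together with standard facts about closed orientable manifolds.
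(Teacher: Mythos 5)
Your proof is correct and follows essentially the same route as the paper: finite generation of the integral homology of compact manifolds, the universal coefficient theorem to kill the free rank (the paper applies it to the augmented chain complex, which handles degree $0$ uniformly where you instead invoke connectedness separately — an immaterial difference), and tensoring with $\mathbb{Q}$ to conclude. The caveat you flag about needing finite generation before deducing finiteness from $A \otimes_{\mathbb{Z}} R = 0$ is exactly the point the paper's rank argument relies on.
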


\begin{proof}
Since $P^{n}$ and $Q^{p}$ are compact manifolds, their integral homology groups are finitely generated in every degree by Corollary A.8 and Corollary A.9 in \cite[p. 527]{H}.
Therefore, by applying the universal coefficient theorem for homology as stated in Theorem 3A.3 in \cite[p. 264]{H} to the augmented chain complex $C \colon \dots \rightarrow C_{0}(P) \rightarrow \mathbb{Z} \rightarrow 0$ of $P$, we conclude from $H_{i}(C; R) = \widetilde{H}_{i}(P; R) = 0$ for $i < n$ that $\operatorname{rank} \widetilde{H}_{i}(P; \mathbb{Z}) = \operatorname{rank} H_{i}(C) = 0$ for $i < n$ because $R \neq 0$.
Similarly, we conclude from $\widetilde{H}_{i}(Q; R) = 0$ for $i \in \mathbb{Z}$ that $\operatorname{rank} \widetilde{H}_{i}(Q; \mathbb{Z}) = 0$ for $i \in \mathbb{Z}$.
Thus, $\widetilde{H}_{i}(P; \mathbb{Z})$, $i < n$, and $\widetilde{H}_{i}(Q; \mathbb{Z})$, $i \in \mathbb{Z}$, are finite abelian groups.
Finally, if $P^{n}$ and $Q^{p}$ are in addition orientable, then, using $H_{\ast}(X; \mathbb{Q}) \cong H_{\ast}(X; \mathbb{Z}) \otimes \mathbb{Q}$ for any space $X$ (see Corollary 3A.6(a) in \cite[p. 266]{H}), it follows that $P^{n}$ is a rational homology $n$-sphere, and $Q^{p}$ is a rational homology $p$-ball.
\end{proof}

\begin{remark}
If a closed $R$-orientable topological $n$-manifold $P^{n}$ of dimension $n > 0$ satisfies $\widetilde{H}_{i}(P; R) = 0$ for $i < n$, then $P$ is an $R$-homology $n$-sphere.
In fact, analogously to the proof of \Cref{proposition r homology implies rational homology}\ref{proposition r homology implies rational homology a} we can show that $\operatorname{rank} \widetilde{H}_{0}(P; \mathbb{Z}) = 0$ because $n > 0$.
Thus, we have $H_{0}(P; \mathbb{Z}) \cong \mathbb{Z}$, and obtain
$$
R \cong \operatorname{Hom}(H_{0}(P; \mathbb{Z}), R) \cong H^{0}(P; R) \cong H_{n}(P; R)
$$
by the universal coefficient theorem for cohomology (Theorem 3.2 in \cite[p.  195]{H}) and Poincar\'{e} duality for $P$ (Theorem 3.30 in \cite[p. 241]{H}).
All in all, $\widetilde{H}_{\ast}(P; R) \cong \widetilde{H}_{\ast}(S^{n}; R)$.
\end{remark}

\subsection{Stein factorization of special generic maps}\label{Stein factorization}
First, let us recall the notion of Stein factorization of an arbitrary continuous map.

\begin{definition}
Let $f \colon X \rightarrow Y$ be a continuous map between topological spaces.
We define an equivalence relation $\sim_{f}$ on $X$ as follows.
Two points $x_{1}, x_{2} \in X$ are called equivalent, $x_{1} \sim_{f} x_{2}$, if there is a point $y \in Y$ such that $x_{1}$ and $x_{2}$ are contained in the same connected component of the fiber $f^{-1}(y)$.
The equivalence relation $\sim_{f}$ on $X$ gives rise to a unique factorization of $f$ of the form

\begin{equation}\label{stein factorization diagram}
\begin{tikzcd}
X \ar{dr}[swap]{q_{f}} \ar{rr}{f} && Y \\
& W_{f}, \ar{ur}[swap]{\overline{f}} &
\end{tikzcd}
\end{equation}
where $W_{f} := X/\sim_{f}$ is the quotient space equipped with the quotient topology, $q_{f} \colon X \rightarrow W_{f}$ is the continuous quotient map, and the map $\overline{f} \colon W_{f} \rightarrow Y$ is continuous.
The diagram (\ref{stein factorization diagram}), or sometimes the space $W_{f}$, is called the \emph{Stein factorization} of $f$.
\end{definition}

Let $f \colon M^{n} \rightarrow \mathbb{R}^{p}$, $1 \leq p < n$, be a special generic map of a connected closed smooth $n$-manifold $M$ into Euclidean $p$-space.
In the following, we recall from \cite{S1} some important properties of the Stein factorization of $f$.

As explained in \cite[p. 267]{S1}, the Stein factorization $W_{f}$ of $f$ can be equipped with the structure of a compact parallelizable smooth $p$-manifold with boundary in such a way that the quotient map $q_{f} \colon M \rightarrow W_{f}$ is a smooth map which satisfies $q_{f}^{-1}(\partial W_{f}) = S(f)$, and restricts to a diffeomorphism $S(f) \cong \partial W_{f}$.
Moreover, it is shown in the proof of Proposition 2.1 in \cite{S1} that $M \setminus S(f)$ is the total space of a smooth (not necessarily linear) $S^{n-p}$-bundle $\pi \colon M \setminus S(f) \rightarrow W_{f} \setminus \partial W_{f}$ over the interior of $W_{f}$.
Furthermore, it is shown there that $M$ is homeomorphic to $\partial \widetilde{E}$, where $\widetilde{E}$ is the total space of the topological $D^{n-p+1}$-bundle $\rho \colon \widetilde{E} \rightarrow W$ associated with the $S^{n-p}$-bundle $\pi| \colon \pi^{-1}(W) \rightarrow W$ that is the restriction of $\pi$ over the closure $W = \overline{W_{f} \setminus C}$ of $W_{f} \setminus C$ in $W_{f}$ for a sufficiently small collar neighborhood $C \cong \partial W_{f} \times [0, 1]$ of $\partial W_{f}$ in $W_{f}$ (compare Proposition 3.1 in \cite{S1}).

Let $R$ be a commutative ring with identity.
Since $\widetilde{E}$ is homotopy equivalent to $W$, and $W \cong W_{f}$ by construction, we have
\begin{equation}\label{homology of total space of disk bundle}
H_{\ast}(\widetilde{E}; R) \cong H_{\ast}(W; R) \cong H_{\ast}(W_{f}; R)
\end{equation}
and
\begin{equation}\label{cohomology of total space of disk bundle}
H^{\ast}(\widetilde{E}; R) \cong H^{\ast}(W; R) \cong H^{\ast}(W_{f}; R).
\end{equation}

From now on, let us assume that $M$ is orientable.
Then, the sphere bundle $\pi$ is orientable in the sense of \cite[p. 442]{H}, and the associated disk bundle $\rho$ is orientable as well.
We conclude that the total space $\widetilde{E}$ of $\rho$ is an orientable compact topological $(n+1)$-manifold.
Since the manifolds $M$, $\widetilde{E}$, and $W_{f}$ are all $R$-orientable by \Cref{remark on orientability with r coefficients}, Poincar\'{e}-Lefschetz duality (see Theorem 3.43 in \cite[p. 254]{H}) implies
\begin{equation}\label{pl for m}
H_{\ast}(M; R) \cong H^{n-\ast}(M; R),
\end{equation}
\begin{equation}\label{pl for e}
H_{\ast}(\widetilde{E}, \partial \widetilde{E}; R) \cong H^{n+1-\ast}(\widetilde{E}; R),
\end{equation}
\begin{equation}\label{pl for wf}
H_{\ast}(W_{f}, \partial W_{f}; R) \cong H^{p-\ast}(W_{f}; R),
\end{equation}
and
\begin{equation}\label{pl for bwf}
H_{\ast}(\partial W_{f}; R) \cong H^{p-1-\ast}(\partial W_{f}; R).
\end{equation}

Analogously to Proposition 3.10 in \cite{S1}, we have a long exact sequence of the form
\begin{equation}\label{long exact sequence}
\begin{tikzcd}
\dots \ar{r} & H_{q+1}(M; R) \ar{r} & H_{q+1}(W_{f}; R) \ar{r} & H^{n-q}(W_{f}; R) & \\
\ar{r} & H_{q}(M; R) \ar{r} & H_{q}(W_{f}; R) \ar{r} & H^{n-q+1}(W_{f}; R) \ar{r} & \dots \\
\dots & & & & \\
\dots \ar{r} & H_{1}(M; R) \ar{r} & H_{1}(W_{f}; R) \ar{r} & H^{n}(W_{f}; R) \ar{r} & 0.
\end{tikzcd}
\end{equation}
(In order to derive (\ref{long exact sequence}), we start with the homology long exact sequence of the pair $(\widetilde{E}, \partial \widetilde{E})$.
Then, we make the replacements $H_{q}(\partial \widetilde{E}; R) \cong H_{q}(M; R)$ by using that $M$ is homeomorphic to $\partial \widetilde{E}$, $H_{q}(\widetilde{E}; R) \cong H_{q}(W_{f}; R)$ by (\ref{homology of total space of disk bundle}), and $H_{q}(\widetilde{E}, \partial \widetilde{E}; R) \cong H^{n+1-q}(\widetilde{E}; R) \cong H^{n+1-q}(W_{f}; R)$ by using (\ref{pl for e}) and (\ref{cohomology of total space of disk bundle}).
The right end of (\ref{long exact sequence}) has the claimed form because $M$ and $W_{f}$ are both connected so that the map $H_{0}(\partial \widetilde{E}; R) \rightarrow H_{0}(\widetilde{E}; R)$ is an isomorphism.)

Next, we note that
\begin{equation}\label{vanishing result}
H^{q}(W_{f}; R) \stackrel{(\ref{pl for wf})}{\cong} H_{p-q}(W_{f}, \partial W_{f}; R) = 0, \qquad q \geq p,
\end{equation}
where $H_{0}(W_{f}, \partial W_{f}; R) = 0$ holds because $W_{f}$ is connected as the image of the connected space $M$ under the surjective Stein factorization $q_{f} \colon M \rightarrow W_{f}$.
Thus, using (\ref{vanishing result}) and (\ref{long exact sequence}), we conclude that
\begin{equation}\label{equality interval}
H_{q}(M; R) \cong H_{q}(W_{f}; R), \qquad q \leq n-p.
\end{equation}

\section{Proof of \Cref{theorem homology spheres}}\label{proof of main result}
Our proof is almost identical to the proof of Proposition 4.1 in \cite{S1}.
However, we have to assure that it still works under the weaker assumptions (we use coefficients in $R$ instead of $\mathbb{Z}$, and make no assumptions about fundamental groups).
\par\bigskip

Let us first suppose that the Stein factorization $W_{f}$ of $f$ is an $R$-homology $p$-ball, with $R$ being a principal ideal domain.
Since $M$ is orientable by assumption and $W_{f}$ is orientable as a parallelizable manifold, the total space $\widetilde{E}$ of the disk bundle $\rho$ is an $R$-orientable compact topological $(n+1)$-manifold.
Moreover, we have
\begin{equation*}
H_{\ast}(\widetilde{E}; R) \stackrel{(\ref{homology of total space of disk bundle})}{\cong} H_{\ast}(W_{f}; R) \cong H_{\ast}(D^{p}; R) \cong H_{\ast}(D^{n+1}; R).
\end{equation*}
Thus, $\widetilde{E}$ is an $R$-homology $(n+1)$-ball.
Hence, using that $R$ is a principal ideal domain, we conclude from \Cref{proposition homology balls bound homology spheres} that $\partial \widetilde{E}$ is an $R$-homology $n$-sphere.
As $M$ is homeomorphic to $\partial \widetilde{E}$, $M$ is an $R$-homology $n$-sphere as well.
\par\bigskip

Conversely, we suppose that $M$ is an $R$-homology $n$-sphere, so that
\begin{equation}\label{assumption homology sphere}
\widetilde{H}_{q}(M; R) \cong \widetilde{H}_{q}(S^{n}; R) = 0, \qquad q < n.
\end{equation}
Combining (\ref{assumption homology sphere}) with the long exact sequence (\ref{long exact sequence}), we obtain
\begin{equation}\label{homology and cohomology of wf}
H_{q}(W_{f}; R) \cong H^{n-q+1}(W_{f}; R), \qquad 0 < q < n.
\end{equation}
Next, we observe that
\begin{equation}\label{induction basis}
\widetilde{H}_{q}(W_{f}; R) = 0, \qquad q \leq n-p+1,
\end{equation}
which follows for $q \leq n-p$ from (\ref{equality interval}) and (\ref{assumption homology sphere}), and for $q = n-p+1$ ($> 0$) (provided that $q<n$)
from (\ref{homology and cohomology of wf}) and (\ref{vanishing result}).

In the following, we show by induction on $q$ that $\widetilde{H}_{q}(W_{f}; R) = 0$ for all $q \leq p$.
(Then, it follows immediately that the compact $R$-orientable $p$-manifold $W_{f}$ is an $R$-homology $p$-ball, where note that $W_{f}$ is orientable as a parallelizable manifold.)
For this purpose, we fix $n-p+2 \leq q \leq p$, and suppose that we have already shown
\begin{equation}\label{induction hypothesis}
\widetilde{H}_{i}(W_{f}; R) = 0, \qquad i \leq q-1.
\end{equation}
In the following, we have to show that $H_{q}(W_{f}; R) = 0$.
(Note that (\ref{induction basis}) is the basis $q=n-p+2$ of the induction.)
Since $0 < q < n$, we have
\begin{equation}\label{start of induction}
H_{q}(W_{f}; R) \stackrel{(\ref{homology and cohomology of wf})}{\cong} H^{n-q+1}(W_{f}; R) \stackrel{(\ref{pl for wf})}{\cong} H_{p-n+q-1}(W_{f}, \partial W_{f}; R) \cong \widetilde{H}_{p-n+q-2}(\partial W_{f}; R),
\end{equation}
where the last isomorphism is a connecting homomorphism in the reduced homology long exact sequence of the pair $(W_{f}, \partial W_{f})$, and is an isomorphism because $\widetilde{H}_{p-n+q-1}(W_{f}; R) = 0 = \widetilde{H}_{p-n+q-2}(W_{f}; R)$ by induction hypothesis (\ref{induction hypothesis}) (note that $p-n+q-1 \leq q-1$ because $p < n$).
If $q = n-p+2$, then we obtain as desired
\begin{equation}
H_{q}(W_{f}; R) \stackrel{(\ref{start of induction})}{\cong} \widetilde{H}_{0}(\partial W_{f}; R) = 0,
\end{equation}
where the last equality holds because the number of connected components of $\partial W_{f} = S(f)$ is at most $1 + \operatorname{rank} H_{p-1}(M; \mathbb{Z}) = 1$ by Proposition 3.15 in \cite{S1}, where note that $M$ is orientable.
(Here, $\operatorname{rank} H_{p-1}(M; \mathbb{Z}) = 0$ holds by \Cref{proposition r homology implies rational homology}\ref{proposition r homology implies rational homology a} because $1 < p < n$.)
If $q > n-p+2$, then
\begin{equation}\label{induction second case}
H_{q}(W_{f}; R) \stackrel{(\ref{start of induction})}{\cong} H_{p-n+q-2}(\partial W_{f}; R) \stackrel{(\ref{pl for bwf})}{\cong} H^{n-q+1}(\partial W_{f}; R).
\end{equation}
Since $\partial W_{f} \cong S(f)$, and $S(f)$ is a closed subset of the closed orientable $n$-manifold $M^{n}$, we have $H^{n-q+1}(\partial W_{f}; R) \cong H_{q-1}(M, M \setminus S(f); R)$ by Poincar\'{e}-Lefschetz duality as stated in Corollary 8.4 in \cite[p. 352]{bre}, where note that the \v{C}ech cohomology can be replaced by singular cohomology because $S(f)$ is a manifold.
Next, we observe that $H_{q-1}(M, M \setminus S(f); R) \cong H_{q-2}(M \setminus S(f); R)$, which is a connecting homomorphism in the reduced homology long exact sequence of the pair $(M, M \setminus S(f))$, and is an isomorphism because $\widetilde{H}_{q-1}(M; R) = 0 = \widetilde{H}_{q-2}(M; R)$ by (\ref{assumption homology sphere}), where $q-1 < n$ because $q \leq p < n$.
Altogether, we have shown that
\begin{equation}\label{induction alexander}
H_{q}(W_{f}; R) \stackrel{(\ref{induction second case})}{\cong} H^{n-q+1}(\partial W_{f}; R) \cong H_{q-1}(M, M \setminus S(f); R) \cong H_{q-2}(M \setminus S(f); R).
\end{equation}

Now we recall that there is a smooth $S^{n-p}$-bundle $\pi \colon M \setminus S(f) \rightarrow \operatorname{int} W_{f}$ over the interior of the Stein factorization $W_{f}$.
We also recall that $\pi$ is an orientable sphere bundle in the sense of \cite[p. 442]{H} because $M^{n}$ is orientable.
Let $D(\pi) \colon E' \rightarrow \operatorname{int} W_{f}$ be the orientable topological $D^{n-p+1}$-bundle associated with $\pi$.
Then, the Thom isomorphism yields $H^{\ast}(W_{f}; \mathbb{Z}) \cong H^{\ast + (n-p+1)}(E', M \setminus S(f); \mathbb{Z})$ (see Corollary 4D.9 in \cite[p. 441]{H}).
Consequently, $H_{\ast}(W_{f}; \mathbb{Z}) \cong H_{\ast + (n-p+1)}(E', M \setminus S(f); \mathbb{Z})$ by Corollary 3.3 in \cite[p. 196]{H}.
Then, Corollary 3A.4 in \cite[p. 264]{H} yields
\begin{equation}\label{thom isomorphism}
H_{\ast}(W_{f}; R) \cong H_{\ast + (n-p+1)}(E', M \setminus S(f); R).
\end{equation}

Let us consider the following part of the homology long exact sequence of the pair $(E', M \setminus S(f))$:
\begin{equation}\label{long exact sequence end of proof}
H_{q-1}(E', M \setminus S(f); R) \rightarrow H_{q-2}(M \setminus S(f); R) \rightarrow H_{q-2}(E'; R).
\end{equation}
Since $E'$ is homotopy equivalent to $W_{f}$, we have $H_{q-2}(E'; R) \cong H_{q-2}(W_{f}; R) = 0$ by induction hypothesis (\ref{induction hypothesis}), where note that $q-2 > 0$ because $q > n-p+2 > 2$.
Furthermore, we have
\begin{equation*}
H_{q-1}(E', M \setminus S(f); R) \stackrel{(\ref{thom isomorphism})}{\cong} H_{p-n+q-2}(W_{f}; R) \stackrel{(\ref{induction hypothesis})}{=} 0,
\end{equation*}
where we can apply the induction hypothesis because $0 < q-n+p-2 < q$.
All in all, we obtain
\begin{equation*}
H_{q}(W_{f}; R) \stackrel{(\ref{induction alexander})}{\cong} H_{q-2}(M \setminus S(f); R) \stackrel{(\ref{long exact sequence end of proof})}{=} 0.
\end{equation*}

This completes the proof of \Cref{theorem homology spheres}.

\section{An application in odd dimensions}\label{An application in odd dimensions}

For the proof of our application \Cref{proposition torsion condition} we need the following
\begin{lemma}\label{lemma square number}
Let $\dots \rightarrow A_{i-1} \rightarrow A_{i} \rightarrow A_{i+1} \rightarrow \dots$ be a long exact sequence of finite abelian groups such that $A_{i} = 0$ for almost all $i \in \mathbb{Z}$.
If $|A_{-i}| = |A_{i}|$ for all $i \in \mathbb{Z}$, then $|A_{0}| = k^{2}$ for some integer $k$.
\end{lemma}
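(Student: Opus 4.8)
The plan is to reduce the statement to the classical multiplicativity of orders in a bounded exact sequence of finite abelian groups, and then to feed in the symmetry hypothesis $|A_{-i}| = |A_i|$.

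First I would use the hypothesis that $A_i = 0$ for almost all $i$ to fix an $m$ with $A_i = 0$ for $|i| > m$; thus the sequence is effectively the finite exact sequence $0 \to A_{-m} \to \dots \to A_m \to 0$, and all the products written below are finite. Setting $Z_i := \ker(A_i \to A_{i+1}) = \operatorname{im}(A_{i-1} \to A_i)$, exactness gives short exact sequences $0 \to Z_i \to A_i \to Z_{i+1} \to 0$, hence $|A_i| = |Z_i|\cdot|Z_{i+1}|$, with $Z_i$ trivial for $|i| > m$. Multiplying these identities over all even $i$, and separately over all odd $i$, the right-hand side telescopes in both cases to $\prod_{j \in \mathbb{Z}} |Z_j|$, so that
\[
\prod_{i \text{ even}} |A_i| \;=\; \prod_{i \text{ odd}} |A_i| \;=:\; P .
\]
(This is exactly the usual alternating-product identity $\prod_i |A_i|^{(-1)^i} = 1$, specialized to our $\mathbb{Z}$-graded situation.)

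Next I would invoke the symmetry. Splitting the index set into $\{0\}$ together with the pairs $\{i,-i\}$ for $i>0$, and using $|A_{-i}|=|A_i|$, the even product becomes $|A_0|\cdot\bigl(\prod_{i>0,\ i\text{ even}}|A_i|\bigr)^{2}$, while the odd product, in which no index equals $0$, becomes $\bigl(\prod_{i>0,\ i\text{ odd}}|A_i|\bigr)^{2}$. Equating the two expressions for $P$ yields
\[
|A_0| \;=\; \left(\frac{\prod_{i>0,\ i\text{ odd}} |A_i|}{\prod_{i>0,\ i\text{ even}} |A_i|}\right)^{\!2}.
\]
The quantity in parentheses is a positive rational number whose square is the positive integer $|A_0|$, hence it is itself a positive integer $k$, and $|A_0| = k^2$.

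I do not expect a genuine obstacle here; the short-exact-sequence bookkeeping is routine. The only points requiring a little care are that the even/odd split must be performed \emph{before} applying the symmetry (so that the unpaired index $0$ lands on the even side and contributes with exponent $1$), and the final remark that a rational number whose square is an integer is itself an integer.
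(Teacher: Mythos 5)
Your proposal is correct and follows essentially the same route as the paper: both establish $\prod_{i\,\mathrm{even}}|A_i|=\prod_{i\,\mathrm{odd}}|A_i|$ via the short exact sequences coming from kernels and images, then apply the symmetry $|A_{-i}|=|A_i|$ to express $|A_0|$ as the square of a ratio of integers. The only cosmetic difference is the last step, where the paper compares prime exponents in $ay^{2}=x^{2}$ and you instead observe that a rational number with integral square is an integer; these are equivalent.
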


\begin{proof}
Every map $\alpha_{i} \colon A_{i} \rightarrow A_{i+1}$ of the given long exact sequence gives rise to a short exact sequence
\begin{equation*}
0 \rightarrow \operatorname{ker}(\alpha_{i}) \rightarrow A_{i} \rightarrow \operatorname{im}(\alpha_{i}) \rightarrow 0
\end{equation*}
of finite abelian groups.
Thus, we have
\begin{equation}\label{product factorization}
|A_{i}| = |\operatorname{ker}(\alpha_{i})| \cdot |\operatorname{im}(\alpha_{i})|, \qquad i \in \mathbb{Z}.
\end{equation}
Using that $\operatorname{ker}(\alpha_{i}) = \operatorname{im}(\alpha_{i-1})$ by exactness of the given sequence, we have
\begin{equation*}
\prod_{i \text{ odd}}|A_{i}| \stackrel{(\ref{product factorization})}{=} \prod_{i \text{ odd}}|\operatorname{im}(\alpha_{i-1})| \cdot |\operatorname{ker}(\alpha_{i+1})| = \prod_{i \text{ even}}|\operatorname{im}(\alpha_{i})| \cdot |\operatorname{ker}(\alpha_{i})| \stackrel{(\ref{product factorization})}{=} \prod_{i \text{ even}} |A_{i}|,
\end{equation*}
where note that the products are finite because $A_{i} = 0$ for almost all $i \in \mathbb{Z}$.

If $|A_{-i}| = |A_{i}|$ for all $i \in \mathbb{Z}$, then we can write
\begin{equation*}
|A_{0}| = \frac{\prod_{i \text{ odd}} |A_{i}|}{\prod_{0 \neq i \text{ even}} |A_{i}|} = \frac{(\prod_{j \geq 0} |A_{2j+1}|)^{2}}{(\prod_{j \geq 1} |A_{2j}|)^{2}}.
\end{equation*}
Thus, the positive integers $a = |A_{0}|$, $x = \prod_{j \geq 0} |A_{2j+1}|$ and $y = \prod_{j \geq 1} |A_{2j}|$ satisfy $ay^{2} = x^{2}$.
By comparing the exponents of prime numbers in the prime factorizations of $a$, $x$, and $y$, we conclude that $|A_{0}| = k^{2}$ for some integer $k$.
\end{proof}

For the rest of this paper, let $M$ be a connected closed smooth manifold of dimension $n \geq 1$.

The main result of this section is the following

\begin{proposition}\label{proposition torsion condition}
Let $f \colon M^{n} \rightarrow \mathbb{R}^{p}$ ($1 \leq p < n$) be a special generic map.
If $M^{n}$ is a rational homology $n$-sphere of odd dimension $n = 2k+1 \geq 5$, then the cardinality of the finite abelian group $H_{k}(M; \mathbb{Z})$ is the square of an integer.
\end{proposition}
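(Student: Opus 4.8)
The plan is to produce, from the long exact sequence (\ref{long exact sequence}), a long exact sequence of finite abelian groups that is symmetric in the sense of \Cref{lemma square number} and whose central term is $H_{k}(M;\mathbb{Z})$, and then to invoke \Cref{lemma square number}. First I would collect the consequences of the hypotheses. As a rational homology sphere, $M$ is orientable and $H_{i}(M;\mathbb{Z})$ is finite for $0<i<n$; Poincar\'{e} duality together with the universal coefficient theorem give $\lvert H_{i}(M;\mathbb{Z})\rvert = \lvert H_{n-1-i}(M;\mathbb{Z})\rvert$ for $1\leq i\leq n-2$, and in particular $H_{n-1}(M;\mathbb{Z})\cong H^{1}(M;\mathbb{Z})=0$. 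By \Cref{theorem homology spheres} applied with $R=\mathbb{Q}$, the Stein factorization $W_{f}$ is a rational homology $p$-ball; hence by \Cref{proposition r homology implies rational homology}\ref{proposition r homology implies rational homology b} the groups $H_{i}(W_{f};\mathbb{Z})$ are finite for $i\geq 1$, the universal coefficient theorem gives $\lvert H^{i}(W_{f};\mathbb{Z})\rvert = \lvert H_{i-1}(W_{f};\mathbb{Z})\rvert$ for $i\geq 2$, and, since $W_{f}$ is a compact orientable (being parallelizable) $p$-manifold of positive dimension with $\widetilde{H}_{\ast}(W_{f};\mathbb{Q})=0$, its boundary is nonempty, so $H_{i}(W_{f};\mathbb{Z})=0$ for $i\geq p$. (When $2p<n$, i.e. $p\leq k$, equation (\ref{equality interval}) already gives $H_{k}(M;\mathbb{Z})\cong H_{k}(W_{f};\mathbb{Z})=0$ outright; the substantial case is $2p>n$, but the argument below is uniform.)

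Next I would feed this into (\ref{long exact sequence}) with $R=\mathbb{Z}$. Since $H_{n-1}(W_{f};\mathbb{Z})=0$ (because $p\leq n-1$) and $H^{n}(W_{f};\mathbb{Z})=0$ (because $p<n$), the sequence (\ref{long exact sequence}) contains the exact sequence of finite abelian groups
\begin{equation*}
0 \to H^{2}(W_{f};\mathbb{Z}) \to H_{n-2}(M;\mathbb{Z}) \to H_{n-2}(W_{f};\mathbb{Z}) \to H^{3}(W_{f};\mathbb{Z}) \to \dots \to H_{1}(M;\mathbb{Z}) \to H_{1}(W_{f};\mathbb{Z}) \to 0 .
\end{equation*}
This sequence has $3(n-2)$ terms, grouped into consecutive triples $\bigl(H^{s+1}(W_{f};\mathbb{Z}),\,H_{n-1-s}(M;\mathbb{Z}),\,H_{n-1-s}(W_{f};\mathbb{Z})\bigr)$ for $s=1,\dots,n-2$. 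As $n$ is odd the sequence has a unique central term; a short index count places it at the middle of the triple with $s=k$, that is, at $H_{n-1-k}(M;\mathbb{Z})=H_{k}(M;\mathbb{Z})$.

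Now I would reindex this sequence as $(A_{i})_{i\in\mathbb{Z}}$ with $A_{0}=H_{k}(M;\mathbb{Z})$ and $A_{i}=0$ for $\lvert i\rvert$ large, and verify that $\lvert A_{-i}\rvert=\lvert A_{i}\rvert$ for every $i$. Reading off the triples, $A_{3m}$ and $A_{-3m}$ are, respectively, $H_{k-m}(M;\mathbb{Z})$ and $H_{k+m}(M;\mathbb{Z})$, which have equal order by the duality relation for $M$ recorded above; $A_{3m+1}$ and $A_{-3m-1}$ are $H_{k-m}(W_{f};\mathbb{Z})$ and $H^{k-m+1}(W_{f};\mathbb{Z})$, of equal order by the universal coefficient relation for $W_{f}$; and $A_{3m+2}$ and $A_{-3m-2}$ are $H^{k+m+2}(W_{f};\mathbb{Z})$ and $H_{k+m+1}(W_{f};\mathbb{Z})$, again of equal order. (Conceptually, this symmetry expresses the self-duality, via Poincar\'{e}--Lefschetz duality and the universal coefficient theorem, of the homology long exact sequence of the pair $(\widetilde{E},\partial\widetilde{E})$ from which (\ref{long exact sequence}) is derived.) Then \Cref{lemma square number} applies and yields that $\lvert H_{k}(M;\mathbb{Z})\rvert=\lvert A_{0}\rvert$ is the square of an integer.

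The step I expect to be the main obstacle is this last piece of bookkeeping: one must track precisely how the three families of entries interleave over the positive and negative indices, so that the central term really is $H_{k}(M;\mathbb{Z})$ and the equality of orders survives all the way to the extreme indices, where the replacement of the (infinite cyclic, respectively vanishing) degree-$0$ and degree-$n$ groups by zeros has to be accounted for consistently. The remaining verifications — that every group occurring in the displayed sequence is finite (their degrees lie in $\{1,\dots,n-2\}$ for $M$ and $W_{f}$, and in $\{2,\dots,n-1\}$ for the cohomology of $W_{f}$) and that the displayed sequence is genuinely exact — are routine.
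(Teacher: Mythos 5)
Your proposal is correct and follows essentially the same route as the paper: apply \Cref{theorem homology spheres} with $R=\mathbb{Q}$ to make $W_{f}$ a rational homology ball, extract from (\ref{long exact sequence}) a long exact sequence of finite abelian groups that is symmetric about $H_{k}(M;\mathbb{Z})$ (via Poincar\'{e} duality for $M$ and the universal coefficient theorem for $W_{f}$), and invoke \Cref{lemma square number}. The only differences are cosmetic — the paper first rewrites $H^{n-q+1}(W_{f};\mathbb{Z})$ as $H_{n-q}(W_{f};\mathbb{Z})$ and truncates the sequence one triple earlier, whereas you keep the cohomology groups and match orders across the symmetry — and your index bookkeeping checks out.
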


\begin{proof}
Since rational homology spheres are orientable by \Cref{remark on orientability with r coefficients}, we conclude from \Cref{theorem homology spheres} that the Stein factorization $W_{f}$ is a rational homology $p$-ball.
By \Cref{proposition r homology implies rational homology}, $\widetilde{H}_{i}(M; \mathbb{Z})$, $i < n$, and $\widetilde{H}_{i}(W_{f}; \mathbb{Z})$, $i \in \mathbb{Z}$, are finite abelian groups.
Hence, taking $C$ to be the augmented chain complexes of $M$ and $W_{f}$ in Corollary 3.3 in \cite[p. 196]{H}, we obtain
\begin{equation}\label{homology cohomology of m}
\widetilde{H}^{i}(M; \mathbb{Z}) \cong \widetilde{H}_{i-1}(M; \mathbb{Z}), \qquad i < n,
\end{equation}
and
\begin{equation}\label{homology cohomology of wf}
\widetilde{H}^{i}(W_{f}; \mathbb{Z}) \cong \widetilde{H}_{i-1}(W_{f}; \mathbb{Z}), \qquad i \in \mathbb{Z},
\end{equation}
respectively.
Poincar\'{e} duality for $M^{n}$ yields
\begin{equation}\label{homology cohomology poincare of m}
H_{i}(M; \mathbb{Z}) \cong H^{n-i}(M; \mathbb{Z}) \stackrel{(\ref{homology cohomology of m})}{\cong} H_{n-i-1}(M; \mathbb{Z}), \qquad 1 \leq i \leq n-2.
\end{equation}

In view of $H^{n-1}(W_{f}; \mathbb{Z}) = 0$ (see (\ref{vanishing result}) applied for $q = n-1 \geq p$) and (\ref{homology cohomology of wf}), the long exact sequence (\ref{long exact sequence}) takes for $n \geq 5$ and $R = \mathbb{Z}$ the form
\begin{equation}\label{long exact sequence torsion}
\begin{tikzcd}
 & & 0 = H_{n-2}(W_{f}; \mathbb{Z}) \ar{r} & H_{2}(W_{f}; \mathbb{Z}) & \\
\ar{r} & H_{n-3}(M; \mathbb{Z}) \ar{r} & H_{n-3}(W_{f}; \mathbb{Z}) \ar{r} & H_{3}(W_{f}; \mathbb{Z}) \ar{r} & \dots \\
\dots & & & & \\
\dots \ar{r} & H_{q+1}(M; \mathbb{Z}) \ar{r} & H_{q+1}(W_{f}; \mathbb{Z}) \ar{r} & H_{n-q-1}(W_{f}; \mathbb{Z}) & \\
\ar{r} & H_{q}(M; \mathbb{Z}) \ar{r} & H_{q}(W_{f}; \mathbb{Z}) \ar{r} & H_{n-q}(W_{f}; \mathbb{Z}) \ar{r} & \dots \\
\dots & & & & \\
\dots \ar{r} & H_{2}(M; \mathbb{Z}) \ar{r} & H_{2}(W_{f}; \mathbb{Z}) \ar{r} & H_{n-2}(W_{f}; \mathbb{Z}) = 0. &
\end{tikzcd}
\end{equation}
By assumption, $n = 2k+1$ is odd.
Writing $\dots \rightarrow A_{i-1} \rightarrow A_{i} \rightarrow A_{i+1} \rightarrow \dots$ with $A_{0} = H_{k}(M; \mathbb{Z})$ for the above exact sequence (\ref{long exact sequence torsion}), our claim will follow from \Cref{lemma square number} once we show that $|A_{-i}| = |A_{i}|$ for all integers $i > 0$.
If $i \equiv 1 \; (\operatorname{mod} 3)$, say $i=3s+1$, then we see that $A_{i} = H_{k-s}(W_{f}; \mathbb{Z}) = A_{-i}$ for $k-s \geq 2$, and $A_{i} = 0 = A_{-i}$ for $k-s < 2$.
If $i \equiv 2 \; (\operatorname{mod} 3)$, say $i=3s+2$, then we see that $A_{i} = H_{k+1+s}(W_{f}; \mathbb{Z}) = A_{-i}$ for $k+1+s \leq n-2$, and $A_{i} = 0 = A_{-i}$ for $k+1+s > n-2$.
If $i \equiv 0 \; (\operatorname{mod} 3)$, say $i=3s$, then we see by means of (\ref{homology cohomology poincare of m}) that $A_{i} = H_{k-s}(M; \mathbb{Z}) \cong H_{k+s}(M; \mathbb{Z}) = A_{-i}$ for $k-s \geq 2$, and $A_{i} = 0 = A_{-i}$ for $k-s < 2$.
All in all, we have shown that $|A_{-i}| = |A_{i}|$ for all integers $i > 0$, which completes the proof of \Cref{proposition torsion condition}.
\end{proof}

\begin{remark}\label{remark non-sufficiency}
Our homological condition in \Cref{proposition torsion condition} is in general not sufficient for a rational homology sphere $M$ of odd dimension $n \geq 5$ to admit a special generic map into $\mathbb{R}^{p}$ for some $1 \leq p < n$.
In fact, the real projective space $\mathbb{R}P^{5}$ satisfies $H_{2}(\mathbb{R}P^{5}; \mathbb{Z}) = 0$, but there does not exist a special generic map $f \colon \mathbb{R}P^{5} \rightarrow \mathbb{R}^{p}$ for any $1 \leq p < 5$.
(Otherwise, the universal cover $\pi \colon S^{5} \rightarrow \mathbb{R}P^{5}$ would induce a $2$-sheeted covering $W_{f \circ \pi} \rightarrow W_{f}$ of Stein factorizations of the special generic maps $f \circ \pi$ and $f$ by Proposition 2.6 in \cite{hara}.
Thus, the space $W_{f \circ \pi}$ would have even Euler characteristic $\chi(W_{f \circ \pi}) = 2 \chi(W_{f})$ while being contractible by \Cref{theorem homotopy spheres}.)
\end{remark}

\begin{remark}
Suppose that $n = 4l + 1$ for some integer $l \geq 1$.
After Seifert \cite{sei}, the linking form $b \colon TH_{2l}(N; \mathbb{Z}) \times TH_{2l}(N; \mathbb{Z}) \rightarrow \mathbb{Q}/\mathbb{Z}$ on the torsion subgroup of the homology group $H_{2l}(N; \mathbb{Z})$ of a closed oriented topological $n$-manifold $N$ is a nondegenerate skew-symmetric bilinear form.
Wall has shown (see Theorem 3 in \cite{wall}) that
\begin{align}\label{alternatives}
TH_{2l}(N; \mathbb{Z}) \cong \begin{cases}
H \oplus H, \qquad &\text{if } b(x, x) = 0 \text{ for all } x \in TH_{2l}(N; \mathbb{Z}), \\
H \oplus H \oplus \mathbb{Z}/2 \mathbb{Z}, \qquad &\text{else},
\end{cases}
\end{align}
for a suitable finite abelian group $H$.
Thus, if $M$ is a rational homology $n$-sphere that admits a special generic map into $\mathbb{R}^{p}$ for some $1 \leq p < n$, then \Cref{proposition torsion condition} implies that the first alternative holds for $H_{2l}(M; \mathbb{Z})$ in (\ref{alternatives}).
\end{remark}

Next, we show that the homological condition in \Cref{proposition torsion condition} is generally optimal.

\begin{proposition}\label{proposition realization of torsion group}
Let $m > 0$ be an integer.
There exist an integer $k > 1$ and a $(k-1)$-connected rational homology $(2k+1)$-sphere $M$ with $|H_k(M; \mathbb{Z})| = m^{2}$ which admits a special generic map into $\mathbb{R}^{p}$ for some $1 \leq p<2k+1$.
\end{proposition}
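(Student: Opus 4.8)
\emph{Proof plan.} The plan is to realize $M$ as the boundary of a trivial disk bundle over a carefully chosen model for its Stein factorization. Fix $m$, choose an integer $k\geq 4$, and set $n=2k+1$ and $p=k+3$, so that $1\leq p<n$ and the sphere $S^{n-p}=S^{k-2}$ is simply connected. The first step — and the core of the argument — is to construct a compact, parallelizable, $(k-1)$-connected smooth $p$-manifold $W$ with non-empty boundary whose reduced integral homology is concentrated in degree $k$, with $\widetilde{H}_{k}(W;\mathbb{Z})\cong\mathbb{Z}/m$ and $\widetilde{H}_{i}(W;\mathbb{Z})=0$ for $i\neq k$. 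Such a $W$ can be built from $D^{p}$ by attaching a single $k$-handle — yielding a manifold homotopy equivalent to $S^{k}$ — and then a single $(k+1)$-handle along a framed embedded $k$-sphere in the resulting boundary representing $m$ times the generator of $\pi_{k}$, with all framings chosen so as to keep the handlebody parallelizable. Given $W$, put $\widetilde{E}:=W\times D^{\,n-p+1}$, the trivial $D^{k-1}$-bundle over $W$, and define $M:=\partial\widetilde{E}=(W\times S^{n-p})\cup_{\partial W\times S^{n-p}}(\partial W\times D^{\,n-p+1})$.

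I then check that $M$ admits a special generic map into $\mathbb{R}^{p}$ and is a rational homology $n$-sphere. Since $W$ is parallelizable, Hirsch's immersion theorem supplies an immersion $\iota\colon W\to\mathbb{R}^{p}$; the composition $f\colon M\hookrightarrow\widetilde{E}\xrightarrow{\mathrm{pr}}W\xrightarrow{\iota}\mathbb{R}^{p}$ is then a special generic map by the standard construction of Burlet--de Rham and Saeki, and its Stein factorization $W_{f}$ is diffeomorphic to $W$ (the connected components of the fibres of $f$ are exactly the fibres of $\mathrm{pr}$, regardless of whether $\iota$ is injective). As $W$ is parallelizable it is orientable, hence so is $M$; and since every reduced homology group of $W$ is finite, $W$ is a rational homology $p$-ball. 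Therefore \Cref{theorem homology spheres}, applied with $R=\mathbb{Q}$ in the direction where one assumes $W_{f}$ to be a homology ball, shows that $M$ is a rational homology $n$-sphere.

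It remains to pin down the low-degree homotopy type of $M$ and to compute $H_{k}(M;\mathbb{Z})$. By the universal coefficient theorem, the fact that $\widetilde{H}_{\ast}(W)$ is concentrated in degree $k$ forces $H^{j}(W;\mathbb{Z})=0$ for all $j\notin\{0,k+1\}$ and $H^{k+1}(W;\mathbb{Z})\cong\operatorname{Ext}(\mathbb{Z}/m,\mathbb{Z})\cong\mathbb{Z}/m$. Feeding this into \eqref{equality interval} gives $\widetilde{H}_{i}(M;\mathbb{Z})\cong\widetilde{H}_{i}(W;\mathbb{Z})=0$ for $i\leq n-p=k-2$, and the portion of \eqref{long exact sequence} around $q=k-1$ — where the relevant cohomology groups of $W$ vanish — gives in addition $\widetilde{H}_{k-1}(M;\mathbb{Z})\cong\widetilde{H}_{k-1}(W;\mathbb{Z})=0$. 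Since $S^{n-p}$ and $W$ are simply connected, van Kampen's theorem applied to the decomposition of $M$ above gives $\pi_{1}(M)=0$, so the Hurewicz theorem shows that $M$ is $(k-1)$-connected. Finally, the portion of \eqref{long exact sequence} around $q=k$ reduces, because $H_{k+1}(W;\mathbb{Z})=0$ and $H^{k+2}(W;\mathbb{Z})=0$, to a short exact sequence $0\to H^{k+1}(W;\mathbb{Z})\to H_{k}(M;\mathbb{Z})\to H_{k}(W;\mathbb{Z})\to 0$; as both outer groups have order $m$, we conclude $|H_{k}(M;\mathbb{Z})|=m^{2}$, which completes the proof.

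I expect the construction of $W$ to be the main obstacle. One must simultaneously arrange that $W$ be parallelizable, be $(k-1)$-connected, and have reduced homology exactly $\mathbb{Z}/m$ in degree $k$; and the constraint $p<n$ leaves room only for $p=k+3$ (or another value in a narrow range), so that the $(k+1)$-handle has to be attached along a $k$-sphere of codimension two in $\partial W$ — a regime where both producing a suitable framed embedded representative of the right class and keeping the resulting handlebody parallelizable require genuine care (both are feasible once $k$ is large enough, and the value of $k$ may be enlarged freely since the statement only asks for the existence of one such $k$). A secondary point is to verify that the construction really yields reduced homology \emph{concentrated} in degree $k$, rather than also in degree $k+1$; this concentration is precisely what makes the cohomology–vanishing inputs to \eqref{long exact sequence} valid and hence produces the clean $\operatorname{Ext}$-extension computing $|H_{k}(M;\mathbb{Z})|$.
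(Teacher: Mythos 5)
Your overall architecture coincides with the paper's: take a compact parallelizable $p$-manifold $W$ with $\widetilde{H}_{*}(W;\mathbb{Z})$ concentrated in degree $k$ and equal to $\mathbb{Z}/m\mathbb{Z}$, realize $M$ as $\partial(W\times D^{n-p+1})$ via Saeki's construction (Proposition 2.1 of \cite{S1}) so that $W_{f}\cong W$, apply \Cref{theorem homology spheres}, and read off $H_{k}(M;\mathbb{Z})$ from the extension $0\to H^{k+1}(W;\mathbb{Z})\to H_{k}(M;\mathbb{Z})\to H_{k}(W;\mathbb{Z})\to 0$ supplied by \eqref{long exact sequence}. All of that bookkeeping (including the use of \eqref{equality interval} and the Hurewicz argument for $(k-1)$-connectivity) is correct.

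The genuine gap is exactly where you predict it: the existence of $W$ is asserted, not proved. Your proposed route --- attach a $k$-handle to $D^{p}$ and then a $(k+1)$-handle along a framed embedded $S^{k}$ representing $m$ times the generator, ``with all framings chosen so as to keep the handlebody parallelizable'' --- hides two nontrivial claims. First, the attaching sphere lives in a $(p-1)$-manifold with $p\leq 2k$, so $2k\not<p-1$ and general position never produces an embedded representative of a prescribed class; one must instead pipe together $m$ parallel copies of the core sphere, and then control the normal bundle of the result. Second, and more seriously, the obstruction to extending a trivialization of $TW$ over the $(k+1)$-cell lies in $\pi_{k}(SO(p))$, while changing the handle framing only moves it by the image of $\pi_{k}(SO(p-k-1))$; since $p-k-1\leq k-1$ is far below the stable range, there is no a priori reason this image is large enough to kill the obstruction. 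So parallelizability is not merely a matter of ``choosing framings'' and would need a genuine argument (or a different choice of $k$, $p$). Incidentally, your claim that $p$ is essentially forced to equal $k+3$ is not right --- any $p$ with $k+2\leq p\leq 2k$ is a priori admissible --- but enlarging $p$ does not escape the stable-range problem above. The paper sidesteps all of this: it takes $K\subset\mathbb{R}^{a}$ to be a punctured $3$-dimensional lens space $L(m,l)\setminus\operatorname{pt}$ (so $H_{1}(K;\mathbb{Z})\cong\mathbb{Z}/m\mathbb{Z}$ and no other reduced homology), forms the $r$-fold suspension $L\subset\mathbb{R}^{a+r}$, and lets $W$ be a regular neighborhood of $L$ in $\mathbb{R}^{a+r}$. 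A regular neighborhood in Euclidean space is automatically a compact parallelizable manifold, deformation retracts to $L$ (so its homology is exactly $\mathbb{Z}/m\mathbb{Z}$ in degree $k=r+1$), and is simply connected because $L$ is a suspension. Substituting that construction for your handlebody sketch closes the gap and leaves the rest of your argument intact.
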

\begin{proof}
Let $W^{p}$ be a compact parallelizable smooth $p$-manifold with boundary.
If $n > p > 0$, then by Proposition 2.1 in \cite{S1}, there exists a special generic map $f \colon M^{n} \rightarrow \mathbb{R}^{p}$, where the closed smooth $n$-manifold $M$ is diffeomorphic to the boundary of the product $W \times D^{n-p+1}$ (after smoothing the corners), and the Stein factorization of $f$ is diffeomorphic to $W$.

Now, we suppose in addition that $n=2k+1$ for some integer $k>1$, and that $W$ is a simply connected rational homology $p$-ball whose only non-vanishing integral homology group in positive degree is $H_{k}(W; \mathbb{Z}) \cong \mathbb{Z}/m\mathbb{Z}$.
Then, it follows from \Cref{theorem homology spheres} that $M$ is a rational homology $n$-sphere, and $M$ is simply connected by Proposition 3.9 in \cite{S1}.
Moreover, using the assumptions on the homology of $W$ in the long exact sequence (\ref{long exact sequence torsion}) from the proof of \Cref{proposition torsion condition}, we see that $H_{q}(M; \mathbb{Z}) = 0$ for $2 \leq q \leq k-1$, and that there is a short exact sequence
\begin{equation*}
0 \rightarrow H_{k}(W; \mathbb{Z}) \rightarrow H_{k}(M; \mathbb{Z}) \rightarrow H_{k}(W; \mathbb{Z}) \rightarrow 0.
\end{equation*}
Thus, $M$ is $(k-1)$-connected by the Hurewicz theorem, and we have $|H_{k}(M; \mathbb{Z})| = |H_{k}(W; \mathbb{Z})|^{2} = m^{2}$.
This shows that $M$ will have all the desired properties.

Thus, it remains to construct a manifold $W$ having all of the above properties.
For this purpose, we consider a finite connected simplicial complex $K$ embedded in some $\mathbb{R}^{a}$, and whose only non-vanishing integral homology group in positive degree is $H_{1}(K; \mathbb{Z}) = \mathbb{Z}/m \mathbb{Z}$.
(Such a simplicial complex $K$ can be obtained by an embedded $3$-dimensional lens space $L(m, l) = L_{m}(1, l)$ (compare \Cref{lens spaces} below) with a small open $3$-disk removed.)
Then, by taking $r$-fold suspension, we obtain a finite connected simplicial complex $L$ embedded in $\mathbb{R}^{a+r}$ whose only non-vanishing integral homology group in positive degree is $H_{r+1}(L; \mathbb{Z}) = \mathbb{Z}/m \mathbb{Z}$.
It is well-known that $L$ is the deformation retract of a regular neighborhood $V$ in $\mathbb{R}^{a+r}$ that is a compact smoothly embedded $(a+r)$-manifold (which is in particular parallelizable).
Then, by choosing $r > 0$ so large that $n=2k+1 > p$ with $k=r+1$ and $p=a+r$, the manifold $W=V$ will have all of the desired properties.
(In particular, note that $W$ is simply connected by the Freudenthal suspension theorem.)

This completes the proof of \Cref{proposition realization of torsion group}.
\end{proof}

\begin{remark}
Concerning the choice of an embedded simplicial complex $K \subset \mathbb{R}^{a}$ in the proof of \Cref{proposition realization of torsion group}, we note that $a = 5$ is sufficient because any orientable closed 3-manifold can be embedded in $\mathbb{R}^{5}$ according to a result of Hirsch \cite{hir}.
Moreover, by results of Zeeman \cite{zee} and Epstein \cite{ep}, the $3$-dimensional punctured lens space $L(m, l) \setminus \operatorname{pt}$ can be embedded into $\mathbb{R}^{4}$ if and only if $m$ is odd.
Consequently, in \Cref{proposition realization of torsion group} we can realize all values $k \geq 4$, and also $k = 3$ when $m$ is odd.
We do not know if there exists a special generic map $M^{7} \rightarrow \mathbb{R}^{p}$, $1 \leq p < 7$, where $M$ is a rational homology $7$-sphere such that $|H_{3}(M; \mathbb{Z})|$ is even.
\end{remark}

We conclude with applications of \Cref{proposition torsion condition} to determine the dimension sets of some rational homology spheres.

\begin{example}[lens spaces]\label{lens spaces}
For an integer $m > 1$ and integers $l_{1}, \dots, l_{k+1}$ ($k \geq 0$) relatively prime to $m$, the lens space $L_{m}(l_{1}, \dots, l_{k+1})$ (see e.g. Example 2.43 in \cite[p. 144]{H}) is a closed smooth $(2k+1)$-manifold whose integral homology groups are given by
\begin{align*}
H_{i}(L_{m}(l_{1}, \dots, l_{k+1}); \mathbb{Z}) = \begin{cases}
\mathbb{Z} \qquad &\text{for } i=0, 2k+1, \\
\mathbb{Z}/m\mathbb{Z} \qquad &\text{for } i \text{ odd, } 0 < i < 2k+1, \\
0 \qquad &\text{otherwise}.
\end{cases}
\end{align*}
If $k \geq 3$ is odd, and $m = |H_{k}(L_{m}(l_{1}, \dots, l_{k+1}); \mathbb{Z})|$ is not the square of an integer, then \Cref{proposition torsion condition} implies that $L_{m}(l_{1}, \dots, l_{k+1})$ does not admit a special generic map into $\mathbb{R}^{p}$ for any $1 \leq p < 2k+1$.
Furthermore, \`{E}lia\v{s}berg \cite{E} has shown that there is a special generic map $L_{m}(l_{1}, \dots, l_{k+1}) \rightarrow \mathbb{R}^{2k+1}$ if and only if $L_{m}(l_{1}, \dots, l_{k+1})$ is stably parallelizable.
Hence, we have $S(L_{m}(l_{1}, \dots, l_{k+1})) = \{2k+1\}$ if $L_{m}(l_{1}, \dots, l_{k+1})$ is stably parallelizable, and $S(L_{m}(l_{1}, \dots, l_{k+1})) = \emptyset$ else.
If $m$ is an odd prime and $1 \leq l_{i} \leq m-1$ for all $i$, then it follows from \cite{emss} that $S(L_{m}(l_{1}, \dots, l_{k+1})) = \{2k+1\}$ if and only if $k < m$ and $l_{1}^{2j} + \dots + l_{k+1}^{2j}$ is divisible by $m$ for $j = 1, \dots, \lfloor k/2 \rfloor$, where $\lfloor x \rfloor$ denotes the biggest integer $\leq x$ for a real number $x$.
\end{example}

\begin{example}[linear $S^{3}$-bundles over $S^{4}$]\label{s3 bundles over s4}
As explained in \cite{ce}, fiber bundles over $S^{4}$ with fiber $S^{3}$ and structure group $SO(4)$ are classified by elements of $\pi_{3}(SO(4)) \cong \mathbb{Z} \oplus \mathbb{Z}$.
Moreover, the nontrivial integral homology groups of the total space $M_{m, n}$ corresponding to $(m, n) \in \pi_{3}(SO(4))$ are $H_{0}(M_{m, n}; \mathbb{Z}) \cong H_{7}(M_{m, n}; \mathbb{Z}) \cong \mathbb{Z}$ and $H_{3}(M_{m, n}; \mathbb{Z}) \cong \mathbb{Z}/n \mathbb{Z}$.
We note that $M_{m, n}$ is a rational homology $7$-sphere for $n \neq 0$.
Hence, by \Cref{proposition torsion condition}, $M_{m, n}$ does not admit a special generic map into $\mathbb{R}^{p}$ for any $1 \leq p < 7$ whenever $|n|$ is not the square of an integer.
Moreover, \`{E}lia\v{s}berg \cite{E} has shown that there is a special generic map $M_{m, n} \rightarrow \mathbb{R}^{7}$ if and only if $M_{m, n}$ is stably parallelizable.
According to Wilkens \cite{wil}, this is equivalent to the vanishing of an obstruction $\widehat{\beta} \in H^{4}(M_{m, n}; \pi_{3}(SO)) \cong \mathbb{Z}/ n \mathbb{Z}$.
This obstruction has been determined to be $\widehat{\beta} = \frac{p_{1}}{2}(M_{m, n}) \equiv 2m \; (\operatorname{mod} n)$ in \cite[p. 365]{ce}.
All in all, if $|n|$ is not the square of an integer, then
\begin{align*}
S(M_{m, n}) = \begin{cases}
\{n\}, \qquad &n | 2m, \\
\emptyset, \qquad &\text{else}.
\end{cases}
\end{align*}
\end{example}

\textbf{Acknowledgements.}
The author would like to thank Osamu Saeki for invaluable comments on an early draft of the paper.

\bibliographystyle{amsplain}

\end{document}